\def\tank#1{\protected@xdef\@thanks{\@thanks
        \protect\footnotetext[0]{#1}}}
\def\bigfoot{

    \@footnotetext}
\newcommand{\ea}{\end{array}}
\newtheorem{theorem}{Theorem}[section]
\newenvironment{proof}{Proof.}
\begin{document}

\title{Note on information bias and efficiency of composite likelihood}
\author{{Ximing Xu},~~{Nancy Reid} and
{Libai Xu}}
\date{}
\maketitle

\begin{spacing}{2.0}
\begin{abstract}
Does the asymptotic variance of the maximum composite likelihood estimator of a parameter of interest always decrease when the nuisance parameters are known? Will a composite likelihood necessarily become more efficient by incorporating additional independent component likelihoods, or by using component likelihoods with higher dimension? In this note we show through illustrative examples that the answer to both questions is no, and indeed the opposite direction might be observed. The role of information bias is highlighted to understand the occurrence of these paradoxical phenomenon.\\
\noindent {\bf Key words:} Pairwise likelihood; estimating function; Bartlett's second identity; Godambe information  matrix; nuisance parameter.
\medskip

\end{abstract}

\section{Introduction}
The likelihood function for a complex multivariate model may not be available or very difficult to evaluate, and a composite likelihood function constructed from low-dimensional marginal or conditional distributions has become a popular alternative (Varin, 2008; Varin, Reid \& Firth, 2011). Suppose $\mathbf{Y}$ is a $p$-dimensional random vector with probability density function $f(\mathbf{y};\theta)$, with a $q$-dimensional parameter vector $\theta\in\Theta$. Given a set of likelihood functions $L_k(\theta;\mathbf{y})$, $k=1,\ldots,K$, defined by the joint or conditional densities of some sub-vectors of $\mathbf{Y}$, the composite likelihood function (Lindsay, 1988) is defined as
\begin{equation*}
CL(\theta;\mathbf{y})=\prod_{k=1}^{K}L_k(\theta;\mathbf{y})^{w_k}, 
\end{equation*}
where the $w_k$'s are nonnegative weights and the component likelihood $L_k(\theta;\mathbf{y})$ might depend only on a sub-vector of $\theta$. The choice of $L_k(\theta;\mathbf{y})$ and the weights $\{w_k\}$ is critical for improving the efficiency of the resulting statistical inference (Lindsay, 1988; Joe \& Lee, 2009; Lindsay, Yi \& Sun, 2011). In this paper we focus on the two most commonly used composite likelihood functions in literature, independence likelihood and pairwise likelihood, which are defined as $CL_{ind}(\theta;\mathbf{y})=\prod_{r=1}^{p}f(y_r;\theta\, )$ and $CL_{pair}(\theta;\mathbf{y})=\prod_{r=1}^{p-1}\prod_{s=r+1}^{p}f(y_r,y_s;\theta \,)$, respectively. Given a random sample $\{\mathbf{y}^{(1)},\ldots,\mathbf{y}^{(n)}\}$, where each $\mathbf{y}^{(i)}$ is a $p$-dimensional vector, the composite log-likelihood function is
\begin{equation*}
{c\ell}(\theta;\mathbf{y})=\sum_{i=1}^{n}{c\ell}(\theta;\mathbf{y}^{(i)})=\sum_{i=1}^{n} \log CL(\theta;\mathbf{y}^{(i)}),
\end{equation*}
and the maximum composite likelihood estimator (MCLE) is $\hat{\theta}_{CL}={\arg\max}_{\theta}\,{c\ell}(\theta;\mathbf{y})$.

In addition to the computational simplicity, the composite likelihood function has many appealing theoretical properties. In particular, under some regularity conditions, $\hat{\theta}_{CL}$ is consistent and asymptotically normally distributed with variance equal to the inverse of the Godambe information matrix: $G(\theta)=H(\theta)J^{-1}(\theta)H(\theta)$ (Lindsay, 1988; Varin, 2008; Xu \& Reid, 2011). Here $H(\theta)=E\{-\nabla_{\theta}u_c(\theta;\mathbf{y})\}$ is the sensitivity matrix, and $J(\theta)={\rm Var}_\theta\{u_c(\theta;\mathbf{y})\}$ is the variability matrix, with the composite score function $u_c(\theta; y)=\nabla_{\theta}{c\ell}(\theta;\mathbf{y})$. Throughout this paper we use $I(\theta)$ to denote the Fisher information matrix of the full likelihood function. Given two composite likelihood functions $CL_1(\theta;\mathbf{y})$ and $CL_2(\theta;\mathbf{y})$,  $CL_2(\theta;\mathbf{y})$ is said to be  \textit{more efficient} than $CL_1(\theta;\mathbf{y})$ if $CL_2(\theta;\mathbf{y})$ has a greater Godambe information matrix than $CL_1(\theta;\mathbf{y})$ in the sense of matrix inequality. It is well known that the full likelihood function is more efficient than any other composite likelihood function under regularity conditions (Godambe, 1960; Lindsay, 1988), i.e. $I(\theta)-G(\theta)$ is non-negative definite.

In general, the second Bartlett identity does not hold for composite likelihood functions, i.e. $H(\theta)\neq J(\theta)$. After Lindsay (1982), we call a composite likelihood $CL(\theta;\mathbf{y})$ \textit{information-unbiased} if $H(\theta)=J(\theta)$, and \textit{information-biased}, otherwise. Composite likelihood-based  inferential tools have been developed for hypothesis testing (Chandler \& Bate, 2007; Pace, Salvan, \& Sartori, 2011) and model selection (Varin \& Vidoni, 2005; Gao \& Song, 2010). Information bias of a composite likelihood can make the resulting inference more difficult. For example, if the composite likelihood is information-unbiased,  the likelihood ratio statistic has the same asymptotic chi-square distribution as its full likelihood counterpart. On the other hand, if it is information-biased  the likelihood ratio statistic converges in distribution to a weighted sum of some independent $\chi^2$ random variables (Kent, 1982). Adjustments have been proposed to the information-biased composite likelihood ratio statistic such that the adjusted statistic has an asymptotic chi-square distribution (e.g., Chandler \& Bate, 2007; Pace, Salvan, \& Sartori, 2011).

The full likelihood function is information-unbiased, but an information-unbiased composite likelihood is not necessarily fully efficient. 
In fact, any component likelihood function $L_k(\theta;\mathbf{y})$ is information-unbiased. More generally, any composite likelihood function as the product of component likelihoods with mutually uncorrelated score functions is information-unbiased. As an example, consider a $p$-dimensional vector $Y=(Y_{1},\ldots,Y_{p})^{T}$ with density function $f(y_{1},\ldots, y_{p};\theta)$, and defining $f(y_{1}\mid y_{0};\theta)=f(y_{1};\theta)$. It is easy to show that the covariance between the score function of $f(y_{i}\mid y_1,\ldots, y_{i-1};\theta)$ and the score function of $f(y_{j}\mid y_{1},\ldots, y_{j-1};\theta)$ is zero for any $i\neq j\in \{1,\ldots,p\}$. Hence any composite likelihood of the form $$\prod_{i\in \mathcal{A}} f(y_{i}\mid y_{1},\ldots, y_{i-1};\theta),$$ where $\mathcal{A}\subseteq \{1,\ldots,p\}$, is information-unbiased. Conversely, an information-biased composite likelihood function can be fully efficient. The pairwise likelihood function for the equal-correlated multivariate normal model in Section 2 is fully efficient when estimating the common variance $\sigma^2$ and the correlation coefficient $\rho$ (Cox and Reid, 2004), but it is not information-unbiased (Pace, Salvan, \& Sartori, 2011). A sufficient and necessary condition for a composite likelihood to be fully efficient is given in the following theorem.

\begin{theorem}{Theorem 1.}{}%
Suppose the full likelihood function $L(\theta\mid \mathbf{y})$ has the score function $u(\theta)$ and Fisher information $I(\theta)$. Then, for any composite likelihood function $CL(\theta\mid \mathbf{y})$ with the score function $u_c(\theta)$, sensitivity matrix $H(\theta)$, variability matrix $J(\theta)$ and Godambe information $G(\theta)$, $G(\theta)=I(\theta)$ if and only if $u(\theta)=H(\theta)J(\theta)^{-1}u_c(\theta)+b(\theta)$ with probability $1$ for a constant vector $b(\theta)$ with respect to the random vector $\mathbf{y}$.
\end{theorem}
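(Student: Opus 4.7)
The plan is to recognize the identity in the theorem as saying that $u(\theta)$ coincides (up to an additive constant) with its best linear predictor based on $u_c(\theta)$, so that the Godambe lower bound $I(\theta)-G(\theta) \succeq 0$ becomes an equality exactly when this projection is exact. The whole proof is then one variance computation plus the two directions.

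First I would establish the covariance identity $H(\theta) = \mathrm{Cov}_\theta(u_c(\theta), u(\theta))$. This follows by differentiating $E_\theta[u_c(\theta)] = 0$ under the integral sign and using $\nabla_\theta f(y;\theta) = u(\theta) f(y;\theta)$, giving $E_\theta[\nabla_\theta u_c(\theta)] + E_\theta[u_c(\theta) u(\theta)^T] = 0$. I would also note that $H(\theta)$ is symmetric because it is the negative expected Hessian $-E_\theta[\nabla_\theta^2 c\ell(\theta;\mathbf{y})]$ of a scalar function. These two facts are what let the cross-terms in the coming variance computation collapse cleanly.

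Next I would introduce the residual vector $R(\theta) := u(\theta) - H(\theta) J(\theta)^{-1} u_c(\theta)$. Since $E_\theta[u] = E_\theta[u_c] = 0$, we have $E_\theta[R] = 0$. Expanding the variance using bilinearity and the identities from the previous step yields
\begin{equation*}
\mathrm{Var}_\theta(R) = I(\theta) - H J^{-1} H - H J^{-1} H + H J^{-1} J J^{-1} H = I(\theta) - G(\theta).
\end{equation*}
This is the central calculation; everything else is bookkeeping.

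For the $(\Leftarrow)$ direction, if $u(\theta) = H J^{-1} u_c(\theta) + b(\theta)$ almost surely with $b(\theta)$ nonrandom, taking expectations forces $b(\theta) = 0$, and then $I(\theta) = \mathrm{Var}_\theta(u) = H J^{-1} J J^{-1} H = G(\theta)$. For the $(\Rightarrow)$ direction, if $G(\theta) = I(\theta)$ then $\mathrm{Var}_\theta(R) = 0$, so $R(\theta)$ is almost surely equal to some constant $b(\theta)$ (in fact zero), which rearranges to the claimed representation. The only delicate point, and the one I would flag as the main obstacle, is justifying the interchange of differentiation and integration that yields $H = \mathrm{Cov}(u_c, u)$; this is where the regularity conditions from the composite likelihood framework (Lindsay 1988; Varin, Reid \& Firth 2011) are invoked. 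Once that identity and the symmetry of $H$ are in hand, the result drops out of the single-line variance computation above.
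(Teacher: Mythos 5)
Your proposal is correct and follows essentially the same route as the paper: establish $H(\theta)=\mathrm{Cov}_\theta(u_c(\theta),u(\theta))$ and then observe that $I(\theta)-G(\theta)$ is the variance of the residual $u(\theta)-H(\theta)J(\theta)^{-1}u_c(\theta)$, so that equality of the information matrices is equivalent to this residual being almost surely constant. Your write-up simply makes explicit the steps (the differentiation under the integral sign, the symmetry of $H$, and the observation that $b(\theta)$ must in fact be zero) that the paper leaves implicit.
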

\begin{proof}{Proof}{}%
It is easy to show that $H(\theta)=$Cov$(u_c(\theta), u(\theta))$ (Lindsay, 1988). As $I(\theta)=$Var$(u(\theta))$ and $J(\theta)=$Var$(u_c(\theta))$, the result follows as the difference $I(\theta)-G(\theta)=I(\theta)-H(\theta)J(\theta)^{-1}H(\theta)$ is the covariance matrix of $u(\theta)-H(\theta)J(\theta)^{-1}u_c(\theta)$.
\end{proof}

Theorem 1 with $b(\theta)=0$ gives a sufficient condition for the maximum composite likelihood estimator to coincide with the MLE (Kenne Pagui, Salvan \& Sartori, 2014), which is satisfied by the pairwise likelihood for closed exponential family models (Mardia et al., 2009). In particular, the equicorrelated multivariate normal model with unknown variance $\sigma^2$ and correlation coefficient $\rho$ belongs to the closed exponential family, and it has been shown that  $b(\theta)=0$ and $u(\theta)=H(\theta)J(\theta)^{-1}u_c(\theta)$ or equivalently, $u_c(\theta)=J(\theta)H(\theta)^{-1}u(\theta)$  for the pairwise likelihood function (Pace, Salvan, \& Sartori, 2011). Its application in more general exponential family models has been studied in Kenne Pagui, Salvan, \& Sartori (2014b).

In this paper we explore the impact of information bias on the composite likelihood inference in more detail. In Section 2 we show through the equicorrelated multivariate normal model that an information-biased composite likelihood may lead to less efficient estimates of the parameters of interest when the nuisance parameters are known. A sufficient condition is also provided for the occurrence of such a paradoxical phenomenon. We would expect that a more efficient composite likelihood can be obtained by incorporating additional independent component likelihoods or using higher dimensional component likelihoods. However such strategies do not always work for information-biased composite likelihood functions, as shown in Section 3. We conclude with a discussion in Section 4.

\section{Composite likelihood with known nuisance parameters}
In the presence of nuisance parameters, it is well known that the maximum likelihood estimator of the parameter of interest will have a smaller asymptotic variance when the nuisance parameters are known. It is easy to check that this also holds for information-unbiased composite likelihood functions. Suppose the $q$-dimensional parameter vector $\theta$ is partitioned as $\theta=(\psi, \lambda)$, where $\psi$ is a $q_1$-dimensional parameter vector of interest and $\lambda$ is a $q_2$-dimensional nuisance parameter vector, $q=q_1+q_2$. The Godambe information matrix of a information-unbiased composite likelihood is $G(\theta)=H(\theta)=J(\theta)$, and
\begin{equation*}
G(\theta)=
\left( {\begin{array}{cc}
 G_{\psi\psi}&G_{\psi\lambda}\\
 G_{\lambda\psi}&G_{\lambda\lambda}
 \end{array} } \right),
\end{equation*}
where $G_{\psi\psi}$ is the $q_1\times q_1$ submatrix of $G(\theta)$ pertaining to $\psi$, and $G_{\lambda\lambda}$ the $q_2\times q_2$ sub-matrix of $G(\theta)$ pertaining to $\lambda$. When $\lambda$ is unknown, the asymptotic variance of the MCLE of $\psi$ is given by $(G_{\psi\psi}-G_{\psi\lambda}G_{\lambda\lambda}^{-1}G_{\lambda\psi})^{-1}$; when $\lambda$ is known, the asymptotic variance of the MCLE of $\psi$ can be shown to be $G_{\psi\psi}^{-1}$. Since $G_{\psi\lambda}G_{\lambda\lambda}^{-1}G_{\lambda\psi}$ is a nonnegative matrix, we have $(G_{\psi\psi}-G_{\psi\lambda}G_{\lambda\lambda}^{-1}G_{\lambda\psi})^{-1} \geq G_{\psi\psi}^{-1}$.
However, the reverse relationship may be observed for an information-biased composite likelihood, which is illustrated through the equicorrelated multivariate normal model in the rest of this section. From previous section we know that the pairwise likelihood $CL_{pair}(\theta;\mathbf{y})$ is information-biased for this model.

\subsection{Pairwise likelihood in equicorrelated multivariate normal model}
\noindent {\bf Example 1.} Suppose $\mathbf{y}^{(1)},\ldots,\mathbf{y}^{(n)}$ are $n$ independent observations from the same $p$-dimensional multivariate normal distribution with zero mean and covariance matrix $\Sigma=\sigma^{2}\{(1-\rho)I_{p}+\rho J_{p}\}$, where $I_{p}$ is identity matrix and $J_{p}$ is a $p\times p$ matrix with all entries equal to $1$. The common correlation coefficient $\rho$ is the parameter of interest.
\medskip
The equicorrelated multivariate normal model has been well studied to compare the efficiency of pairwise likelihood and full likelihood in different settings (Arnold \& Strauss, 1991; Cox \& Reid, 2004; Mardia et al., 2009): when $\sigma^2$ is unknown, the maximum pairwise likelihood estimator of $\rho$, denoted as $\hat\rho_{pl}$, is identical to the MLE of $\rho$ and hence fully efficient; when $\sigma^2$ is known, the maximum pairwise likelihood estimator, denoted as $\tilde\rho_{pl}$, is less efficient than the maximum likelihood estimator of $\rho$ . Here we are interested in comparing the asymptotic variances of $\hat\rho_{pl}$ and $\tilde\rho_{pl}$. The asymptotic variance of $\tilde\rho_{pl}$ is ( Cox \& Reid, 2004)
 \begin{equation}
 \text{avar}(\tilde{\rho}_{pl}) = \frac{2(1-\rho)^2}{np(p-1)}\frac{c(p,\rho)}{(1+\rho^2)^2},\label{eq:known}
 \end{equation}
where $c(p,\rho)=(1-\rho)^2(3\rho^2+p^2\rho^2+1)+p\rho(-3\rho^3+8\rho^2-3\rho+2)$.
The asymptotic variance of $\hat\rho_{pl}$ can be shown to be
 \begin{equation}
 \text{avar}(\hat{\rho}_{pl}) = \frac{2(1-\rho)^2}{np(p-1)}\{1+(p-1)\rho\}^2.\label{eq:unknown}
 \end{equation}

Comparing the Equations (\ref{eq:known}) and (\ref{eq:unknown}), we find that as $\rho$ approaches its lower bound $-1/(p-1)$, $\text{avar}(\hat{\rho}_{pl})$ decreases to zero while $\text{avar}(\tilde{\rho}_{pl})$ does not. The ratio of the asymptotic variances, avar$(\tilde\rho_{pl})/\text{avar}(\hat\rho_{pl})$, as a function of $\rho$ is plotted in Figure~\ref{fig2} for $p=3$. We can see that when $\rho$ is positive, $\tilde\rho_{pl}$ is more efficient than $\hat\rho_{pl}$; when $\rho<0$, the opposite phenomenon is observed, and when $\rho$ approaches the lower bound $-0.5$, this ratio diverges to infinity. We performed the comparisons for different $p$ and observed the same phenomenon.

\begin{figure}
\begin{center}
\includegraphics[height=7.5cm,width=.72\textwidth]{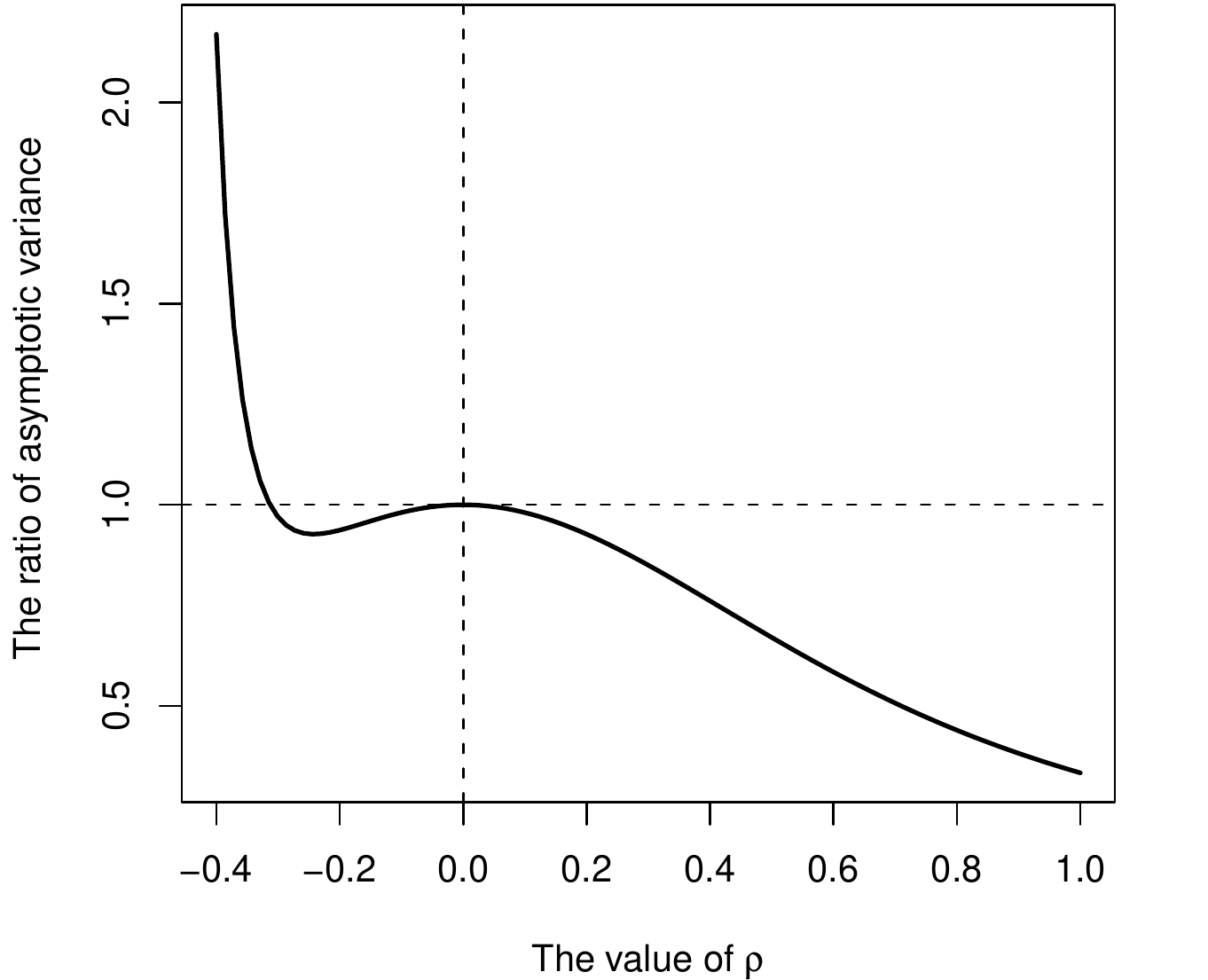}
\end{center}
\caption{The plot of the ratio $r(\rho)$= avar$(\tilde\rho_{pl})/\text{avar}(\hat\rho_{pl})$ at $p=3$. The vertical and horizontal dashed line denotes $\rho=0$ and $r(\rho)=1$ respectively.}
\label{fig2}
\end{figure}

To see that information-biasedness is not a sufficient condition for the paradox to occur, we consider another information-biased composite likelihood function, the full conditional likelihood for the same model:
\begin{equation*}
CL_{FC}(\theta;y)=\prod_{r=1}^{p}f(y_r\mid y_{(-r)};\theta\, ),
\end{equation*}
where $y_{(-r)}$ denotes the random vector excluding $y_r$. When $\sigma^2$ is unknown, the maximum full conditional likelihood estimator of $\rho$, $\hat{\rho}_{_{FC}}$ is identical to $\hat{\rho}_{pl}$ and fully efficient (Mardia et al., 2009); when $\sigma^2$ is known, the maximum full conditional likelihood estimator, $\tilde{\rho}_{_{FC}}$ is less efficient than the maximum likelihood estimator for $p\geq 3$. Using the formula in Mardia, Hughes, \& Taylor (2007), the ratio of the asymptotic variances, avar$(\tilde{\rho}_{_{FC}})/\text{avar}(\hat{\rho}_{_{FC}})$, as a function of $\rho$ is plotted in Figure~\ref{fig3.3} for $p=3$. We can see that the ratio is less than $1$ for all $\rho\in[-1/(p-1), 1]$.

\begin{figure}
\begin{center}
\includegraphics[height=8cm,width=.64\textwidth]{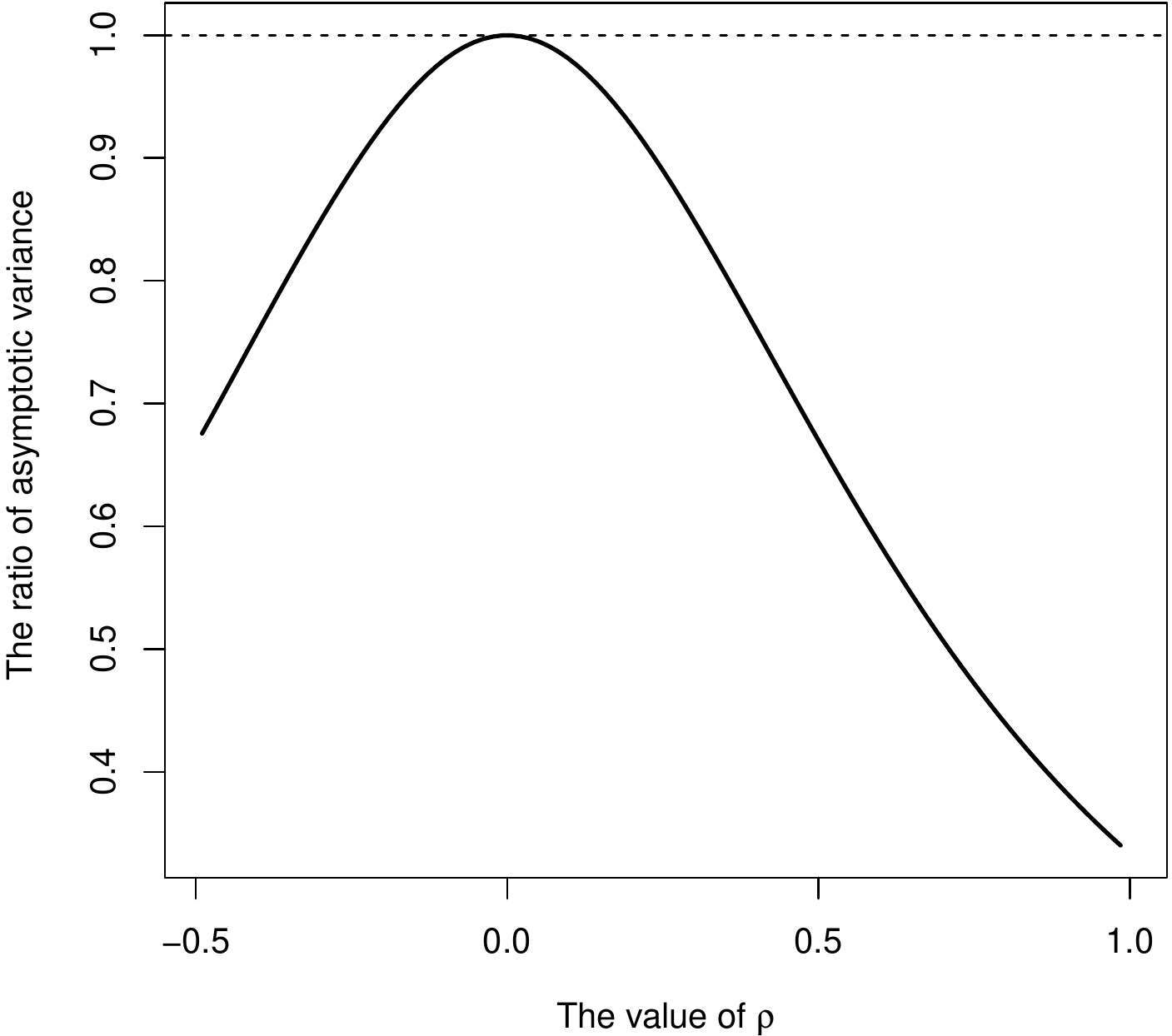}
\end{center}
\caption{The plot of the ratio $r(\rho)=\text{avar}(\tilde{\rho}_{_{FC}})/\text{avar}(\hat{\rho}_{_{FC}})$ at $p=3$. The horizontal dashed line denotes $r(\rho)=1$.}
\label{fig3.3}
\end{figure}

\subsection{A sufficient condition for the occurrence of paradox }
For the inference based on unbiased estimating equations, Henmi \& Eguchi (2004) provided a sufficient condition for the occurrence of paradox in the presence of nuisance parameters, which also applies to the information-biased composite likelihood since its score function is a special unbiased estimating equation:
\begin{theorem}{Proposition 1.}{}%
Suppose the composite likelihood function $CL(\theta\mid \mathbf{y})$ is information-biased and $\theta=(\psi, \lambda)$ with the maximum composite likelihood estimators $\hat{\theta}_{c}=(\hat{\psi}_{c}, \hat{\lambda}_{c})$. We define $\tilde{\psi}_{c}$, the MCLE of $\psi$ when $\lambda$ is known . Then, $\hat{\psi}_{c}$ has a smaller asymptotic variance than $\tilde{\psi}_{c}$ if $\hat{\lambda}_{c}$ and $\hat{\psi}_{c}$ are asymptotically independent but $\hat{\lambda}_{c}$ and $\tilde\psi_{c}$ are dependent.
\end{theorem}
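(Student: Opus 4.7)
The plan is to adapt the argument of Henmi \& Eguchi (2004) to the composite score setting: first derive a stochastic expansion that writes $\hat{\psi}_{c}$ as $\tilde{\psi}_{c}$ plus a correction linear in $\hat{\lambda}_{c}-\lambda_{0}$, and then use the asymptotic independence hypothesis to produce an algebraic cancellation that strictly reduces the variance. Partition the composite score as $u_{c}=(u_{\psi}^{T},u_{\lambda}^{T})^{T}$ and the sensitivity matrix $H$ into conformable blocks $H_{\psi\psi},H_{\psi\lambda},H_{\lambda\psi},H_{\lambda\lambda}$. Observe that $\hat{\psi}_{c}=\tilde{\psi}(\hat{\lambda}_{c})$, where $\tilde{\psi}(\lambda)$ denotes the solution of $u_{\psi}(\tilde{\psi}(\lambda),\lambda)=0$, so that $\tilde{\psi}_{c}=\tilde{\psi}(\lambda_{0})$. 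Implicit differentiation of this defining relation, combined with $E[\partial_{\psi}u_{\psi}]=-H_{\psi\psi}$ and $E[\partial_{\lambda}u_{\psi}]=-H_{\psi\lambda}$, yields $\partial_{\lambda}\tilde{\psi}(\lambda_{0})=-H_{\psi\psi}^{-1}H_{\psi\lambda}$, and a first-order Taylor expansion of $\tilde{\psi}$ at $\lambda_{0}$ then gives the basic identity
\begin{equation*}
\sqrt{n}(\hat{\psi}_{c}-\psi_{0})=\sqrt{n}(\tilde{\psi}_{c}-\psi_{0})-H_{\psi\psi}^{-1}H_{\psi\lambda}\sqrt{n}(\hat{\lambda}_{c}-\lambda_{0})+o_{p}(1).
\end{equation*}

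Next I would extract the asymptotic second-moment structure from this identity. Taking the asymptotic covariance of both sides with $\hat{\lambda}_{c}$ and invoking the hypothesis $\mathrm{Cov}(\hat{\psi}_{c},\hat{\lambda}_{c})=0$ forces
\begin{equation*}
\mathrm{Cov}(\tilde{\psi}_{c},\hat{\lambda}_{c})=H_{\psi\psi}^{-1}H_{\psi\lambda}\,\mathrm{avar}(\hat{\lambda}_{c}).
\end{equation*}
Substituting this back into the asymptotic variance of the basic identity cancels the cross term cleanly and leaves
\begin{equation*}
\mathrm{avar}(\hat{\psi}_{c})=\mathrm{avar}(\tilde{\psi}_{c})-H_{\psi\psi}^{-1}H_{\psi\lambda}\,\mathrm{avar}(\hat{\lambda}_{c})\,H_{\lambda\psi}H_{\psi\psi}^{-1}.
\end{equation*}
The subtracted matrix has the sandwich form $AVA^{T}$ with $A=H_{\psi\psi}^{-1}H_{\psi\lambda}$ and $V=\mathrm{avar}(\hat{\lambda}_{c})$ positive definite, so it is non-negative definite.

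To conclude the strict inequality I would rule out $H_{\psi\lambda}=0$. Under the independence hypothesis, the covariance identity of the previous step shows that $\mathrm{Cov}(\tilde{\psi}_{c},\hat{\lambda}_{c})$ vanishes if and only if $H_{\psi\lambda}=0$, so the assumed asymptotic dependence of $\tilde{\psi}_{c}$ and $\hat{\lambda}_{c}$ forces $H_{\psi\lambda}\ne 0$, making the subtracted matrix non-zero and non-negative definite. Hence $\mathrm{avar}(\hat{\psi}_{c})<\mathrm{avar}(\tilde{\psi}_{c})$ in the matrix sense. The main technical point I expect to watch is the justification of the basic identity: one needs $\sqrt{n}$-consistency of $\hat{\lambda}_{c}$ and uniform control of the second-order Taylor remainder of $\tilde{\psi}$ in a shrinking neighbourhood of $\lambda_{0}$, both of which follow from standard regularity conditions on $u_{c}$. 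Information-biasedness itself plays no explicit role in the variance calculation; as the preceding section has already noted, it enters only indirectly, since the two dependence structures of $(\hat{\psi}_{c},\hat{\lambda}_{c})$ and $(\tilde{\psi}_{c},\hat{\lambda}_{c})$ cannot disagree when $H=J$.
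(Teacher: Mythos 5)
Your argument is correct, but it is worth noting that the paper itself offers no proof of Proposition~1 at all: it simply observes that the composite score is an unbiased estimating function and defers entirely to Henmi \& Eguchi (2004), where this is Theorem/Corollary material. What you have written is, in effect, a self-contained reconstruction of the Henmi--Eguchi argument specialized to the composite score: the identity $\hat{\psi}_c=\tilde{\psi}(\hat{\lambda}_c)$, the implicit-function expansion giving $\sqrt{n}(\tilde{\psi}_c-\psi_0)=\sqrt{n}(\hat{\psi}_c-\psi_0)+H_{\psi\psi}^{-1}H_{\psi\lambda}\sqrt{n}(\hat{\lambda}_c-\lambda_0)+o_p(1)$, and the cancellation of the cross term under the asymptotic-independence hypothesis are exactly the mechanism behind the cited result. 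Your covariance identity $\mathrm{Cov}(\tilde{\psi}_c,\hat{\lambda}_c)=H_{\psi\psi}^{-1}H_{\psi\lambda}\,\mathrm{avar}(\hat{\lambda}_c)$ also correctly converts the ``dependence of $\tilde{\psi}_c$ and $\hat{\lambda}_c$'' hypothesis into $H_{\psi\lambda}\neq 0$, which is what makes the subtracted term nonzero; and your closing remark that the two dependence structures cannot disagree when $H=J$ is consistent with the paper's framing (under information-unbiasedness one checks from $G^{-1}=H^{-1}$ that $\mathrm{Cov}(\hat{\psi}_c,\hat{\lambda}_c)=-H_{\psi\psi}^{-1}H_{\psi\lambda}\,\mathrm{avar}(\hat{\lambda}_c)$, so the hypothesis pair is vacuous there). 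Two small caveats: ``asymptotically independent/dependent'' should be read as zero/nonzero asymptotic covariance, which is legitimate only because the MCLEs are jointly asymptotically normal; and when $\psi$ is vector-valued your conclusion should be stated as ``$\mathrm{avar}(\tilde{\psi}_c)-\mathrm{avar}(\hat{\psi}_c)$ is a nonzero non-negative definite matrix'' rather than a strict matrix inequality, since $AVA^{T}$ with $A=H_{\psi\psi}^{-1}H_{\psi\lambda}$ need not have full rank. Neither caveat affects the validity of the proof.
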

In the example of equicorrelated multivariate normal model, denote by $\hat{\sigma}^2_{pl}$ the maximum pairwise likelihood estimator of $\sigma^2$, we can show that the asymptotic covariance between $\hat{\rho}_{pl}$ and $\hat{\sigma}^2_{pl}$ is $2\rho(1-\rho)\{1+(p-1)\rho\}\sigma^2/(np)$ which goes to $0$ as $\rho$ approaches $-1/(p-1)$ while the asymptotic covariance between  $\tilde\rho_{pl}$ and $\hat{\sigma}^2_{pl}$ is not equal to zero at $\rho=-1/(p-1)$. This may explain why the paradox occurs when $\rho$ is close to its lower bound $-1/(p-1)$.

\section{Composite likelihood with more component likelihoods}
In this section  we consider the inference on a single parameter only, and the simple illustrative examples allow us to calculate the Godambe information matrices or asymptotic variances analytically.

\subsection { Uncorrelated information-biased composite likelihoods}
Consider a set of information-unbiased composite likelihood functions $CL_1(\theta\mid \mathbf{y}), CL_2(\theta\mid \mathbf{y}),\ldots, CL_m(\theta\mid \mathbf{y})$ with mutually uncorrelated score functions, it is easy to show that the product $\prod_{i=1}^m CL_i(\theta\mid \mathbf{y})$ is also information-unbiased and has Godambe information matrix $G(\theta)=\sum_{i=1}^mG_i(\theta)$ where $G_i(\theta)$ is the information matrix of $CL_i(\theta\mid \mathbf{y})$. However, as shown in the example below, if any composite likelihood $CL_i(\theta\mid \mathbf{y})$ is information-biased, then information additivity may not hold for the product of uncorrelated composite likelihoods.

\noindent {\bf Example 2.} Suppose the random vector $(Y_1, Y_2, Y_3)$ follows a normal distribution with mean vector $\mu\times (1, 1, 1)^{T}$ and covariance matrix
$$\Sigma=\left( \begin{array}{ccc}
 1 & ~~~~\rho~~~~ &0  \\
 \rho &~~~~ 1~~~~ &0  \\
 0 & ~~~~0~~~~ & \sigma^2\\
 \end{array}  \right).
$$

Assume $\sigma^2$ is known, $\mu$ and $\rho$ are unknown, and $\mu$ is the only parameter of interest. Suppose we use the independence likelihood function $CL_{12}(\mu)=f(y_1;\mu)\times f(y_2;\mu)$, which is free of the nuisance parameter $\rho$, to estimate $\mu$.  To incorporate the information contained in the independent variable $Y_3$, we also consider the composite likelihood function $CL_{123}(\mu)=CL_{12}(\mu)\times f(y_3;\mu)$.

It is easy to show that the maximum composite likelihood estimators for $CL_{12}(\mu)$ and $CL_{123}(\mu)$ are $\hat{\mu}_{12}=(\bar{y}_1+\bar{y}_2)/2$  and $\hat{\mu}_{123}=\sigma^2(\bar{y}_1+\bar{y}_2)/(1+2\sigma^2)+\bar{y}_3/(1+2\sigma^2)$ with variances $(1+\rho)/2n$ and $[2(1+\rho)\sigma^4+\sigma^2]/n(1+2\sigma^2)^2$ respectively. We can compare the variances of the two maximum composite likelihood estimators directly. For example if $\sigma^2=2$, the variance of $\hat{\mu}_{123}$ is $(10+8\rho)/(25n)$ which is smaller than $(1+\rho)/(2n)$ if and only if $\rho>-5/9$.

Note that if $\rho=-1$ this result is expected as $(Y_1, Y_2)$ determines $\mu$ exactly with $\mu\equiv(Y_1+Y_2)/2$; but the dependence on $\sigma^2$ of the range of $\rho$ over which $Y_3$ degrades the inference is surprising; as $\sigma^2$ increases this range approaches $[-1, -1/2)$.

\subsection{Pairwise likelihood and independence likelihood}

Intuitively, a composite likelihood with higher dimensional component likelihoods should achieve a higher efficiency, although it usually demands more computational cost. In this subsection we focus on comparing the independence likelihood $CL_{ind}(\theta;\mathbf{y})=\prod_{r=1}^{p}f(y_r;\theta)$ and the pairwise likelihood $CL_{pair}(\theta;\mathbf{y})=\prod_{r=1}^{p-1}\prod_{s=r+1}^{p}f(y_r,y_s;\theta )$.  $CL_{pair}(\theta;\mathbf{y})$ can be written as the product of $CL_{ind}(\theta;\mathbf{y})$ and some pairwise conditional likelihood functions.  Under independence, $CL_{ind}(\theta;y)$ is identical to the full likelihood, and $CL_{pair}(\theta;y)=\{CL_{ind}(\theta;y)\}^{p-1}$, which is also fully efficient. For a multivariate normal model with continuous responses, Zhao \& Joe (2005) proved that the maximum pairwise likelihood estimator of the regression coefficient has a smaller asymptotic variance than the maximum independence likelihood estimator.

In fact, within the family of information-unbiased composite likelihood functions pairwise likelihood is  at least as efficient as independence likelihood: each bivariate density $f(y_r,y_s;\theta)$ has a larger information matrix than $f(y_r;\theta)$ and the total number of the bivariate densities in $CL_{pair}(\theta;\mathbf{y})$ is $p(p-1)/2>p$ when $p>2$.  However, the two composite likelihoods are information-biased in general especially for complex dependent data and we may observe the reverse relationship.

A bivariate binary model was used in Arnold \& Strauss (1991) to show that the pairwise conditional likelihood could be less efficient than the independence likelihood. For a bivariate model the pairwise likelihood is the full likelihood and hence fully efficient. Here we consider a four dimensional binary model which has a complex dependence structure but also allows us to compare the (asymptotic) variances of different composite likelihood estimators analytically.

\noindent {\bf Example 3.} Suppose $(Y_1, Y_2, Y_3, Y_4)$ follows a Multinomial$(1;\theta,\theta, \theta/k,1-2\theta-\theta/k)$, where $k$ is a positive constant and $0\leq\theta\leq k/(2k+1)$. The parameter $\theta$ controls both the mean and covariance structures, and we can change the value of $k$ to adjust the strength of dependence. The value of $Y_4$ is completely determined by $1-Y_1-Y_2-Y_3$. Given a random sample of size $n$ from this model, we estimate $\theta$ based on the independent triplets $(y_{1}^{(i)}, y_{2}^{(i)}, y_{3}^{(i)})$, $i=1,\ldots, n$.

The full likelihood for the model of $(Y_1, Y_2, Y_3)$ is
\begin{equation}
L(\theta)=\prod_{i=1}^{n}\theta^{y_{1}^{(i)}+y_{2}^{(i)}}(\frac{\theta}{k})^{y_{3}^{(i)}}(1-2\theta-\frac{\theta}{k})^{1-y_{1}^{(i)}-y_{2}^{(i)}-y_{3}^{(i)}}. \label{eq:eg2full}
\end{equation}
Solving the score equation we get the maximum likelihood estimator of $\theta$, $\hat\theta=(\bar{y}_1+\bar{y}_2+\bar{y}_3)/(2+1/k)$. The exact variance of $\hat\theta$ is
\begin{equation}
\text{Var}(\frac{\bar{y}_1+\bar{y}_2+\bar{y}_3}{2+1/k})=\frac{1}{n}(\frac{\theta}{2+1/k}-\theta^2).\nonumber
\end{equation}
The independence likelihood function for the model of $(Y_1, Y_2, Y_3)$ is
\begin{align}
CL_{ind}(\theta)&=\prod_{i=1}^{n}f(y_1^{(i)};\theta)f(y_2^{(i)};\theta)f(y_3^{(i)};\theta)\nonumber \\
                &=\prod_{i=1}^{n}\theta^{y_1^{(i)}}(1-\theta)^{y_1^{(i)}}\theta^{y_2^{(i)}}(1-\theta)^{y_2^{(i)}}  (\frac{\theta}{k})^{y_3^{(i)}}(1-\frac{\theta}{k})^{1-y_3^{(i)}} \label{eq:eg2ind}
\end{align}
and we can calculate its sensitivity matrix and variability matrix as
\begin{align}
H_{ind}(\theta)&=\frac{2}{\theta}+\frac{2}{1-\theta}+\frac{1}{k\theta}+\frac{1}{k(k-\theta)}, \nonumber \\ J_{ind}(\theta)&=\frac{2}{\theta(1-\theta)}+\frac{1}{\theta(k-\theta)}-\frac{2}{(1-\theta)^2}-\frac{4}{(1-\theta)(k-\theta)}.\nonumber
\end{align}
The pairwise likelihood function is
\begin{align}
CL_{pair}(\theta)&=\prod_{i=1}^{n}f(y_1^{(i)}, y_2^{(i)};\theta)f(y_1^{(i)}, y_3^{(i)};\theta)f(y_2^{(i)},y_3^{(i)};\theta)\nonumber \\
                &=\prod_{i=1}^{n}\theta^{2y_1^{(i)}+2y_2^{(i)}}(1-2\theta)^{1-y_1^{(i)}-y_2^{(i)}}  (\frac{\theta}{k})^{2y_3^{(i)}}(1-\theta-\frac{\theta}{k})^{2-y_1^{(i)}-y_2^{(i)}-2y_3^{(i)}},\label{eq:eg2pair}
\end{align}
and we can calculate its sensitivity matrix and variability matrix as
\begin{align}
H_{pair}(\theta)&=\frac{4}{\theta}+\frac{2}{k\theta}+\frac{4}{1-2\theta}+\frac{2(1+1/k)^2}{1-(1+1/k)\theta},\nonumber\\
J_{pair}(\theta)&=2A^2\theta(1-\theta)+B^2(\frac{\theta}{k})(1-\frac{\theta}{k})-2A^2\theta^2-4AB\frac{\theta^2}{k},\nonumber
\end{align}
where $A=2/\theta+2/(1-2\theta)+(1+1/k)/(1-\theta-\theta/k)$,  $B=2/\theta+2(1+1/k)/(1-\theta-\theta/k)$.

For $k=5$, the asymptotic variances of the maximum composite likelihood estimators for (\ref{eq:eg2full}), (\ref{eq:eg2ind}) and (\ref{eq:eg2pair}) multiplied by $n$ are plotted as a function of $\theta$ in Figure~\ref{fig3.1}. We can see that when $\theta<0.3$, the three estimators perform almost equally well; when $\theta>0.3$, the full likelihood becomes more efficient than the independence likelihood, and the independence likelihood estimator is more efficient than the pairwise likelihood estimator. We also carried out the comparisons for different values of $k$ and found that at $k=1$, both the independence likelihood and the pairwise likelihood are fully efficient, but when $k>1$, the independence likelihood is more efficient than the pairwise likelihood and the ratio of asymptotic variances approaches $1$ when $k\rightarrow\infty$. When $k<1$, the pairwise likelihood estimator is more efficient than the independence likelihood estimator and the ratio of asymptotic variances approaches $1$ when $k\rightarrow 0$.

\begin{figure}
\begin{center}
\includegraphics[height=7cm,width=.99\textwidth]{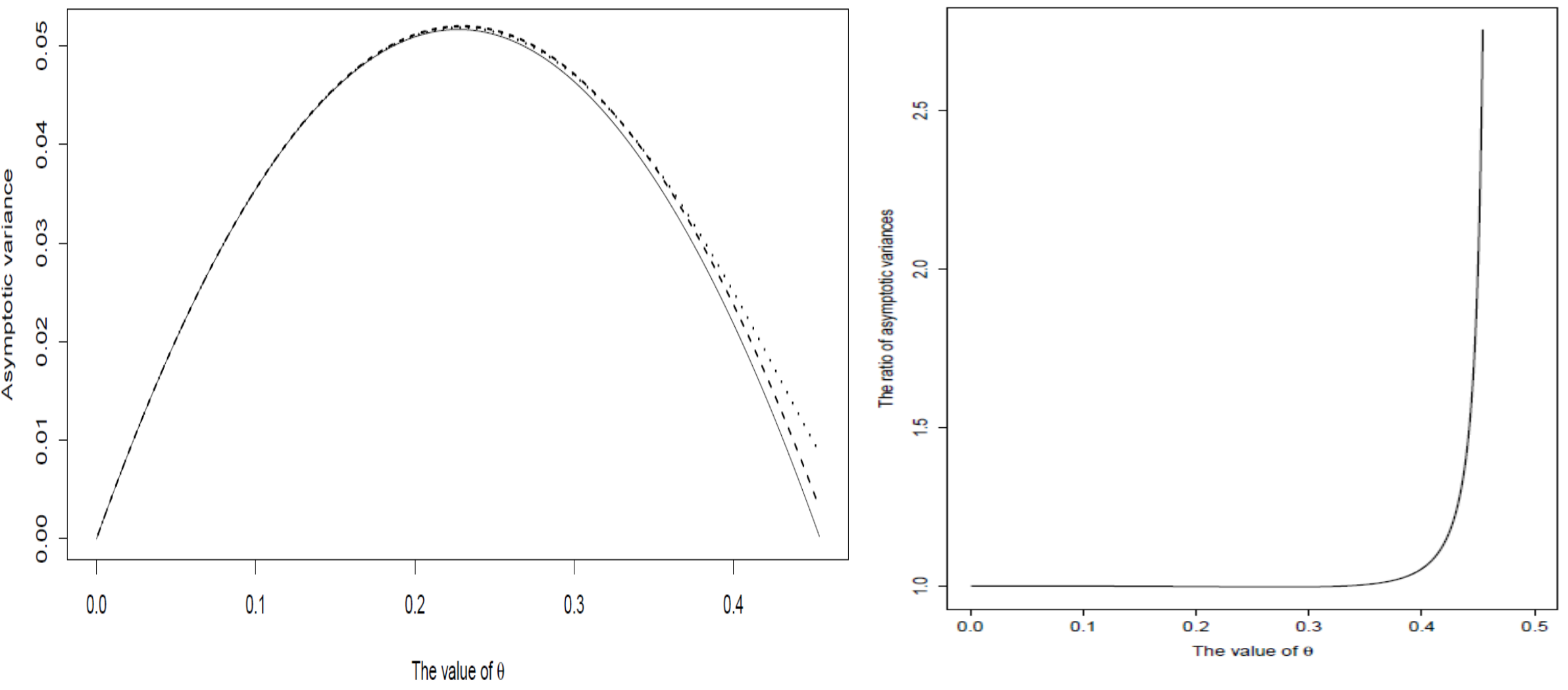}
\end{center}
\caption{Left panel: the asymptotic variances (multiplied by n) of the maximum composite likelihood estimators for the full likelihood (solid line), the independence likelihood (dashed line) and the pairwise likelihood (dotted line). Right panel: the ratio of the asymptotic variance of pairwise likelihood to the asymptotic variance of independence likelihood.}
\label{fig3.1}
\end{figure}
This example suggests that in practical applications of composite likelihood inference, where the models will usually have more complex dependence structure and incomplete data (e.g., Yi, Zeng, \& Cook, 2011), some care is required for the use of higher dimensional composite likelihood to obtain more efficient estimators. A hybrid composite likelihood combining lower dimensional marginal and conditional likelihoods with different weights is suggested to guarantee the improvement of efficiency (Cox \& Reid, 2004; Kenne Pagui, Salvan, \& Sartori, 2014a).

\section{Discussion}
As a complement to the discussion on composite likelihood inference in Reid (2012), in this paper we explored the impact of information bias on the composite likelihood based inference in different scenarios. An information-unbiased composite likelihood behaves somewhat like the ordinary likelihood but it can be very inefficient. On the other hand an information-biased likelihood brings extra difficulty to the computation and is more likely to exhibit undesirable inferential properties, although the information loss can be minimized with a set of  carefully selected weights.

One way to avoid the paradoxical phenomenon in Section 2 is to convert the composite score function $u_c(\theta;\mathbf{y})$ to an unbiased estimating function by projecting (Henmi \& Eguchi, 2004; Lindsay, Yi, \& Sun, 2011):
\begin{equation}
u_c^*(\theta;\mathbf{y})=H(\theta)J^{-1}(\theta)u_c(\theta;\mathbf{y})=\arg\min_{\nu=Au_c(\theta;\mathbf{y})}E\left\{\left\|u(\theta;\mathbf{y})-\nu(\theta;\mathbf{y})\right\|^2\right\},\label{eq:project}
\end{equation}
where $u(\theta;\mathbf{y})$ is the score function of full likelihood, $A$ ranges over all $q\times q$ matrices, $H(\theta)$ and $J(\theta)$ are the sensitivity matrix and variability matrix. It is easy to check that $u_c^*(\theta;\mathbf{y})$ is information-unbiased. Since $H(\theta)$ and $J(\theta)$ are constant matrices, this projection does not change the point estimator of $\theta$, and $u_c^*(\theta;\mathbf{y})$ has the same Godambe information as $u_c(\theta;\mathbf{y})$. In the equicorrelated multivariate normal model with $\theta=(\rho, \sigma^2)$, the score function of the pairwise likelihood is  $u_{pl}(\theta; \mathbf{y})=J(\theta)H^{-1}(\theta)u(\theta,\mathbf{y})$ (Pace, Salvan \& Sartori, 2011). From Equation (\ref{eq:project}), the projected estimating funtion of $u_{pl}(\theta; \mathbf{y})$ is equal to the score function of full likelihood, $u(\theta;\mathbf{y})$. In complex models, the required computation for the projected estimating function $u_c^*(\theta;\mathbf{y})$ can be intractable and it may be a better idea to design a nuisance-parameter-free composite likelihood function carefully for practical use. As an example, a pairwise difference likelihood that eliminates nuisance parameters in a Neyman--Scott problem is described in Hjort \& Varin (2008).

\end{spacing}

\end{document}